\documentclass[11pt,twoside]{amsart}
\usepackage{amsxtra}
\usepackage{color}
\usepackage{amsopn}
\usepackage{amsmath,amsthm,amssymb}
\usepackage{mathrsfs,mathtools}
\usepackage{enumerate}
\usepackage{xcolor}

\newtheorem{theorem}{Theorem}[section]

\newtheorem{proposition}[theorem]{Proposition}
\newtheorem{lemma}[theorem]{Lemma}
\newtheorem*{theorem*}{Theorem}
\theoremstyle{definition}

\newtheorem{remark}[theorem]{Remark}

% mathbb
\newcommand{\R}{{\mathbb R}}

\newcommand{\C}{{\mathbb C}}

\newcommand{\bC}{\mathbb{C}}

\newcommand{\bR}{\mathbb{R}}

% Greek letters
\newcommand{\beq}{\begin{equation}}
\newcommand{\eeq}{\end{equation}}
\renewcommand{\a}{\alpha}
\renewcommand{\b}{\beta}

\renewcommand{\d}{\delta}

\newcommand{\g}{\gamma}

\renewcommand{\l}{\lambda}

\newcommand{\s}{\sigma}

\renewcommand{\L}{\Lambda}

% Calligraphic letters

% SU(3) 

% gruppi
\newcommand{\U}{{\mathrm U}}

% operatori 

\newcommand{\n}{\nabla}

\DeclareMathOperator\End{End}

\DeclareMathOperator\Ad{Ad}
\DeclareMathOperator\ad{ad}
\DeclareMathOperator\vol{vol}

\DeclareMathOperator{\Span}{Span}

% curvatura

% mathfrak

\newcommand{\ga}{\mathfrak{a}}
\newcommand{\gb}{\mathfrak{b}}

\newcommand{\gf}{\mathfrak{f}}
\renewcommand{\gg}{\mathfrak{g}}

\newcommand{\gk}{\mathfrak{k}}
\newcommand{\gl}{\mathfrak{l}}
\newcommand{\gm}{\mathfrak{m}}
\newcommand{\gn}{\mathfrak{n}}

\newcommand{\gq}{\mathfrak{q}}

\newcommand{\gs}{\mathfrak{s}}

\newcommand{\gu}{\mathfrak{u}}

\newcommand{\gz}{\mathfrak{z}}

\newcommand{\so}{\mathfrak{so}}

%colore testo

%vari

\textheight=8in
\textwidth=6in
\oddsidemargin=0.25in
\evensidemargin=0.25in

\def\sideremark#1{\ifvmode\leavevmode\fi\vadjust{%            The remark
		\vbox to0pt{\hbox to 0pt{\hskip\hsize\hskip1em%               will appear only
				\vbox{\hsize3cm\tiny\raggedright\pretolerance10000%          on the side
					\noindent #1\hfill}\hss}\vbox to8pt{\vfil}\vss}}}%  
\numberwithin{equation}{section}

%%%%%%%%%%%%%%%%%%%%%%%%%%%%%%%%%%%%%%%%%%%%%%%%%%%%%%%%%%%%%%%%%%%%%%%%%%%%%%%%%%%%%%%%%
%%%%%%%%%%%%%%%%%%%%%%%%%%%%%%%%%%%%%%%%%%%%%%%%%%%%%%%%%%%%%%%%%%%%%%%%%%%%%%%%%%%%%%%%%
%																TITOLO 
%%%%%%%%%%%%%%%%%%%%%%%%%%%%%%%%%%%%%%%%%%%%%%%%%%%%%%%%%%%%%%%%%%%%%%%%%%%%%%%%%%%%%%%%%
%%%%%%%%%%%%%%%%%%%%%%%%%%%%%%%%%%%%%%%%%%%%%%%%%%%%%%%%%%%%%%%%%%%%%%%%%%%%%%%%%%%%%%%%%
\title[Hermitian Curvature Flow on compact homogeneous spaces]{Hermitian Curvature Flow on compact homogeneous spaces}

\author{Francesco Panelli and Fabio Podest\`a }
\address{ Dipartimento di Matematica e Informatica "Ulisse Dini", Universit\`a di Firenze, V.le Morgagni 67/A, 50100 Firenze, Italy}
\email{fabio.podesta@unifi.it,\ francesco.panelli@unifi.it}

\date{\today}

\subjclass[2010]{53C25, 53C30}
\keywords{}

\begin{document}
	\begin{abstract} We study a version of the Hermitian curvature flow on compact homogeneous complex manifolds. We prove that the solution has a finite exstinction time $T>0$
		%exists on a maximal interval $[0,T)$ for some $T>0$, 
		and we analyze its behaviour when $t\to T$. We also determine the invariant static metrics and we study the convergence of the normalized flow to one of them.  
		
	\end{abstract}
	
\maketitle
\section{Introduction}
Given a Hermitian manifold $(M,J,h)$ it is well-known that there exists a family of metric connections leaving the complex structure $J$ parallel (see \cite{G}). Among these the Chern connection is particularly interesting and provides different Ricci tensors which can be used to define several meaningful parabolic metric flows preserving the Hermitian condition and generalizing the classical Ricci flow in the non-K\"ahler setting. In \cite{Gi} Gill introduced an Hermitian flow on a compact complex manifold involving the first Chern-Ricci tensor, namely the one whose associated 2-form represents the first Chern class of $M$ (see also \cite{TW} for further related results). In \cite{ST} Streets and Tian introduced a family of Hermitian curvature flows (HCFs) involving the second Ricci tensor $S$ together with an arbitrary symmetric Hermitian form $Q(T)$ which is quadratic in the torsion $T$ of the Chern connection:
$$
h_t'=-S+Q(T).
$$  
For any admissible $Q(T)$ the corresponding flow is strongly parabolic and the short time existence of the solution is estabilished. This family includes geometrically interesting flows as for instance the {\em pluriclosed flow} that was previously introduced in \cite{ST2} and preserves the pluriclosed condition $\partial\overline{\partial}\omega=0$. More recently Ustinovskiy  focused on a particular choice of $Q(T)$ obtaining another remarkable flow, which we will call $\text{\it HCF}_{\text{\it U}}$ for brevity, with several geometrically relevant features (see \cite{U}). In particular Ustinovskiy proves that the $\text{HCF}_{\text{U}}$ on a compact Hermitian manifold preserves Griffiths non-negativity of the Chern curvature, generalizing the classical result that K\"ahler-Ricci flow preserves the positivity of the bisectional holomorphic curvature (see e.g. \cite{M}). In \cite{U3} the author could prove stronger results showing that the $\text{HCF}_{\text{U}}$ preserves several natural curvature positivity conditions besides Griffiths positivity. In \cite{U2} Ustinovskiy focuses on complex homogeneous manifolds and proves that the finite dimensional space of induced metrics (which are not necessarily invariant) is preserved by the $\text{HCF}_{\text{U}}$.

Given a connected complex Lie group $G$ acting transitively, effectively and holomorphically on a complex manifold $M$, a simple observation shows that $M$ does not carry any $G$-invariant Hermitian metric unless the isotropy is finite. More recently in \cite{LPV} Lafuente, Pujia and Vezzoni considered the behaviour of a general HCF in the space of left-invariant Hermitian metrics on a complex unimodular Lie group.

In this work we focus on C-spaces, namely compact simply connected complex manifolds $M$ which are homogeneous under the action of a compact semisimple Lie group $G$. By classical results $M$ fibers over a flag manifold $N$ with a complex torus as a typical fiber $F$; in particular we consider the case where $N$ is a product of compact Hermitian symmetric spaces and $F$ is non-trivial (so that $M$ does not carry any K\"ahler metric). While the analysis of a generic HCF on these manifolds seems to be out of reach, some special flows may deserve attention. In view of the classification results obtained in \cite{FGV} for C-spaces carrying invariant SKT metrics, the pluriclosed flow can be investigated only in very particular cases of such spaces (see also \cite{B} for the analysis of the pluriclosed flow on compact locally homogeneous surfaces and \cite{AL} for the case of left-invariant metrics on Lie groups). On the other hand the $\text{HCF}_{\text{U}}$ is geometrically meaningful and can be dealt with more easily. Actually we are able to write down the flow equations in the space of $G$-invariant metrics in a surprisingly simple way. In particular we prove the remarkable fact that the flow can be described by an induced flow on the base, which depends only on the initial conditions on the base itself, and an induced flow on the fiber. Moreover the maximal existence domain is bounded above and we can provide a precise description of the limit metric. Indeed we see that the kernel of the limit metric defines an integrable distribution whose leaves coincide with the orbits of the complexification of a suitable normal subgroup $S$ of $G$. When the leaves of this foliation are closed, we can prove the Gromov-Hausdorff convergence of the space to a lower dimensional Riemannian homogeneous space. We are also able to estabilish the existence and the uniqueness up to homotheties of invariant static metrics for the $\text{HCF}_{\text{U}}$. These metrics turn out to be particularly meaningful when $S=G$, as in this case the normalized flow with constant volume converges to one of them.

The work is organized as follows. In Section 2 we give some prelimiary notions on C-spaces and the $\text{HCF}_{\text{U}}$. In Section 3 we compute the invariant tensors involved in the flow equations and in Section 4 we prove our main result, which is summarized in Theorem \ref{Main} and Proposition \ref{GH}.

\subsection*{Notation} Throughout the following we will denote Lie groups by capital letters and the corresponding Lie algebras by gothic letters. The Cartan Killing form of a Lie algebra will be denoted by $\kappa$. \par \medskip
\subsection*{Acknowledgements} The authors heartily thank Yury Ustinovskiy and Marco Radeschi for many valuable discussions and remarks as well as Luigi Vezzoni and Daniele Angella for their interest.
\section{Preliminaries}

We consider a compact simply connected complex manifold $(M,J)$ with $\dim_{\small{\bC}} M = m$ and we assume that it is homogeneous under the action of a compact semisimple Lie group $G$ of biholomorphisms, namely $M = G/L$ for some compact subgroup $L\subset G$. By Tits fibration theorem the manifold $M$ fibers $G$-equivariantly onto a flag manifold $N:= G/K$, say $\pi:M\to G/K$, and the  manifold $N$ can be endowed with a $G$-invariant complex structure $I$ so that the fibration $\pi: (M,J)\to (N,I)$ is holomorphic. Since $M$ is supposed to be simply connected, the typical fiber $F:= K/L$ is a complex torus of complex dimension $k$ (see e.g. \cite{A}). Such a homogeneous complex manifold, which will be called a simply connected C-space (see \cite{W}), is not K\"ahler if the fiber $F$ is not trivial. \par 
In this work we will assume that the base of the Tits fibration is a product of irreducible Hermitian symmetric spaces of complex dimension at least two. More precisely, if we write $G$ as the (local) product of its simple factors, say $G=G_1\cdot\ldots\cdot G_{s}$, then $K$ also splits accordingly as $K = K_1\cdot\ldots\cdot K_{s}$ with $K_i\subset G_i$ and  $(G_i,K_i)$ is a Hermitian symmetric pair with $\dim_{\small{\bC}} G_i/K_i = n_i$. Note that  $m = k + \sum_{i=1}^{s} n_i$. \par 
At the level of Lie algebras we can write the Cartan decomposition $\gg_i = \gk_i\oplus \gn_i$  for each $i=1,\ldots,s$. We recall that the center of $\gk_i$ is one-dimensional and spanned by an element $Z_i$ so that $\gk_i = \gs_i \oplus \bR Z_i$, $\gs_i$ being the semisimple part of $\gk_i$, and  $\ad(Z_i) = I|_{\gn_i}$. We can now write the following decompositions 
$$\gl = \bigoplus_{i=1}^s \gs_i \oplus \gb, \quad \gg = \gl \oplus \gf \oplus \gn,\quad  $$
where $\gn:= \bigoplus_{i=1}^s\gn_i$ and $\gb,\gf$ are abelian subspaces of $\gz(\gk) = \bigoplus_{i=1}^s \bR Z_i$ with $\kappa(\gb,\gf) =0$. Note that $\gf$ and $\gm := \gf \oplus \gn$ identify with the tangent spaces to the fiber and to $M$ respectively. 

 Since the fibration $\pi:M\to N$ is holomorphic, the complex structure $J\in \End(\gm)$ can be written as $J = I_F + I$, where $I_F$ is a totally arbitrary complex structure on the fibre $F$. \par 

We now consider a $G$-invariant Hermitian metric $h$ on $M$, which can be seen as an $\ad(\gl)$-invariant Hermitian inner product on $\gm$. As the $\gs_i$'s are not trivial, we have that $\gl$ acts non-trivially on $\gn$ and trivially on $\gf$, therefore  $h(\gf,\gn)=0$. In particular the restriction $h|_{\gf\times \gf}$ is an arbitrary Hermitian metric. \par Moreover, the $\ad(\gl)$-modules $\gn_i$ are mutually non-equivalent, hence $h(\gn_i,\gn_j)=0$ if $i\neq j$. \par 
If $\gn_i$ is $\gs_i$-irreducible, then Schur Lemma implies that $$h|_{\gn_i\times \gn_i} := -h_i\kappa_i,$$ 
where $h_i\in \bR^+$ and $\kappa_i$ denotes the Cartan-Killing form on $\gg_i$. Note that this is always the case unless $\gg_i = \so(n+2)$ and $\gk_i = \so(2) \oplus \so(n)$ ($n\geq 3$). Throughout the following we will assume that none of the Hermitian factors of the basis $N$ is a complex quadric. \par
 
Given a $G$-invariant Hermitian metric $h$ on $M$, we can consider the associated Chern connection $\nabla$, which is the unique metric connection ($\nabla h = 0$) that leaves $J$ parallel ($\nabla J=0$) and whose torsion $T$ satisfies for $X,Y$ tangent vectors
$$T(JX,Y)= T(X,JY) = JT(X,Y).$$ The curvature tensor $R_{XY} = [\n_X,\n_Y]-\n_{[X,Y]}$ of $\nabla$ gives rise to different Ricci tensors and we are mainly interested in the second Chern-Ricci tensor $S$ which is given by 
$$S(X,Y) =  \sum_{a=1}^ {2m} h(JR_{e_a,Je_a}X,Y),$$
where $\{e_1,\ldots,e_{2m}\}$ denotes an $h$-orthonormal basis. In \cite{ST} Streets and Tian have introduced a family of Hermitian curvature flows  on any complex manifold given by 
\beq\label{flow} h_t' = -S(h) + Q(T),\eeq
where $Q(T)$ is symmetric, $J$-invariant tensor which is an arbitrary quadratic expression involving the torsion $T$. In \cite{ST} the authors proved the short time existence for all these flows for any initial Hermitian metric. In \cite{U} Ustinovskiy considered a special Hermitian flow where the quadratic term $Q(T)$ is given in complex coordinates by 
\beq\label{Q}Q(T)_{i\overline j} = -\frac 12 h^{m\overline n}h^{p\overline s}T_{mp\overline j}T_{\overline n \overline s i}.\eeq 
For the corresponding Hermitian curvature flow ($\text{HCF}_{\text{U}}$) Ustinovskiy could prove several important properties, in particular that it preserves the Griffiths positivity of the initial metric. \par
We now focus on this Hermitian flow on the class of homogeneous compact complex manifolds $(M,J)$ we have introduced in this section. In particular we note that the flow evolves along invariant Hermitian metrics whenever the initial metric is so.

\section{The computation of the tensors  }

In this section we compute the Ricci tensor $S$ and the quadratic expression $Q(T)$ in \eqref{Q} for a $G$-invariant Hermitian metric $h$ on the complex manifold $M = G/L$. Throughout the following we keep the notation introduced in the previous section. \par 
We choose a maximal abelian subalgebra $\ga_i\subset \gk_i$ of $\gg_i$ ($i=1,\ldots, s$). The complexification $\ga^{c}\subset \gg^{{c}}$ where $\ga := \bigoplus_{i=1}^{s}\ga_i$, gives a Cartan subalgebra of $\gg^c$ and we will denote by $R$ the associated root system of $\gg^c$. We denote by $R_i$ the subset of $R$ given by all the roots whose corresponding root vectors belong to $\gg_i^c$ for $i=1,\ldots, s$, so that $R= R_1\cup \ldots\cup R_{s}$. For each $i=1,\ldots, s$ we have 
$$\gk_i^c = \ga_i^c\oplus \bigoplus_{\a\in R_{\gk_i}}\gg_\a,\qquad \gn_i^c =  \bigoplus_{\a\in R_{\gn_i}} \gg_\a$$
so that $R_i = R_{\gk_i}\cup R_{\gn_i}$. Moreover the invariant complex structure $I|_{\gn_i}$ on $\gn_i$ determines an invariant ordering of $R_{\gn_i} = R_{\gn_i}^+\cup R_{\gn_i}^-$ with $I(v) = \pm \sqrt{-1} v$ for every $v\in \gg_\a$ with  $\a\in R_{\gn_i}^{\pm}$.

We will use a standard Chevalley basis $\{E_\a\}_{\a\in R}$ of $\gg^c$ with 
$$\gg^c = \ga^c \oplus \bigoplus_{\a\in R} \bC E_\a,\quad  \overline{E_\a} = - E_{-\a},\quad \kappa(E_\a,E_{-\a}) = 1, $$
$$[E_\a,E_{-\a}] = H_\a,$$
where $\kappa(H_\a,v) = \a(v)$ for every $v\in \ga^c$.\par 
Let $h$ be an invariant Hermitian metric on $M$, i.e. an $\ad(\gl)$-invariant symmetric Hermitian inner product on $\gm$. We recall that $h|_{\gn_i\times \gn_i} := -h_i\kappa_i$ and by the $\ad(\ga)$-invariance be have 
$$ h(E_\a,\overline{E_\b}) = \begin{cases} 0 & \text{if $\a\neq\b$} \\ 
h_i & \text{if $\a=\b\in R_i$.}\end{cases}$$ 
In the complexified tangent space $\gf^c$ of the fiber we fix a complex basis $\mathcal V := \{V_1,\ldots,V_k\}$ of $\gf^{10}$. We also put $h_{a\bar b} := h(V_a,\overline{V_b})$ and $H:= (h_{a\bar b})_{a,b=1,\ldots,k}$.

If we split $\gm^{c} = \gm^{10}\oplus \gm^{01}$ with respect to the extension of $J$ on $\gm^c$, the torsion $T$ can be seen as an element of $\Lambda^2(\gm^*)\otimes \gm$ and it satisfies
\beq \label{T}T(\gm^{10},\gm^{01}) = 0.\eeq
The Chern connection $\nabla$ is completely determined by the corresponding Nomizu's operator $\Lambda\in \gm^*\otimes \End(\gm)$ (see e.g.\cite{KN}), which is defined as follows: for $X,Y\in\gm$ we set $\Lambda(X)Y\in \gm$ to be so that at $o:= [L]\in M$
	$$(\Lambda(X)Y)^*|_o = (\nabla_{X^*}Y^*-[X^*,Y^*])|_o,$$
where for every $Z\in \gm$ we denote by $Z^*$ the vector field on $M$ induced by the one-parameter subgroup $\exp(tZ)$. Since $\nabla$ preserves $J$, the Nomizu's operator satisfies $\Lambda(v)(\gm^{10})\subseteq \gm^{10}$ for every $v\in \gm^c$. Now we recall that the torsion $T$ can be expressed  as follows: for $X,Y\in \gm$ (see e.g. \cite{KN})
	\beq\label{tor} T(X,Y) = \L(X)Y-\L(Y)X - [X,Y]_\gm.\eeq 
	Therefore using \eqref{T} and \eqref{tor} we see that 
\beq\label{Nomizu} \Lambda(A)\overline B = [A,\overline B]^{01}\quad \forall\ A,B\in \gm^{10}.\eeq

\begin{lemma}\label{Lambda} 
	Given $v\in\gf^c$, $w\in\gf^{10}$, $\alpha,\beta\in R_{\gn_i}^+$, $\gamma\in R_{\gn_j}^+$ with, $i,j=1,\dots,s$, $i\neq j$, we have
	\begin{itemize}
		\item[a)] $\Lambda(E_\alpha)E_\beta = 0$,\quad $\Lambda(E_\alpha)E_{\pm\gamma} = 0$ and $\Lambda(E_\alpha)\overline w=0 $;
		\item[b)] $\Lambda(E_\alpha)E_{-\beta} = 0$ for $\beta\neq \alpha$;\quad $\Lambda(E_\alpha)E_{-\alpha} =H_\a^{01}$; 
		\item[c)] $\Lambda(E_\alpha)w= \frac{1}{h_i}h(w,H_\alpha) E_\alpha$;
		\item[d)] $\Lambda(v) = \ad(v)$.
	\end{itemize}
\end{lemma}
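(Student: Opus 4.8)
The plan is to reduce every identity to two building blocks: the relation \eqref{Nomizu}, which evaluates $\Lambda$ on an anti-holomorphic argument, and the metric compatibility of the Chern connection, which I will use to control the holomorphic arguments. Since $\nabla h=0$ and $\nabla J=0$, the operator $\Lambda(v)$ preserves the splitting $\gm^c=\gm^{10}\oplus\gm^{01}$ and satisfies, for all $v,A,B\in\gm^c$,
\[
h(\Lambda(v)A,\overline B)+h(A,\Lambda(v)\overline B)=0 .
\]
I will also use throughout that $h$ is Hermitian, so $h(\gm^{10},\gm^{10})=0$ and $h$ restricts to a nondegenerate pairing $\gm^{10}\times\gm^{01}\to\bC$; hence an element of $\gm^{10}$ is determined by its products against the basis $\{\overline{E_\gamma}\}\cup\{\overline{V_a}\}$ of $\gm^{01}$.

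The anti-holomorphic outputs are immediate from \eqref{Nomizu}. For the third identity of (a), for the $E_{-\gamma}$ case of the second, and for part (b), I write the argument as $\overline B$ with $B\in\gm^{10}$ and use $\Lambda(E_\alpha)\overline B=[E_\alpha,\overline B]^{01}$; the claim then follows from the root structure. The decisive inputs are the properties of the Hermitian symmetric pairs $(\gg_i,\gk_i)$: the spaces $\gn_i^{10}$ are abelian, so $\alpha+\beta\notin R$ for $\alpha,\beta\in R_{\gn_i}^+$; one has $[\gn_i^{10},\gn_i^{01}]\subseteq\gk_i^c$ with $[E_\alpha,E_{-\alpha}]=H_\alpha$ lying in the Cartan; and root vectors from different simple factors commute. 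Combined with $\gf\subseteq\gz(\gk)\subseteq\ga^c$ and $\ad(Z_i)=I$ on $\gn_i$ — which forces $[E_\alpha,\overline w]$ to be a multiple of $E_\alpha\in\gm^{10}$ for $w\in\gf^{10}$, hence to have vanishing $(0,1)$-part — these bracket computations give (a) and (b) at once; in particular $[E_\alpha,E_{-\alpha}]^{01}=H_\alpha^{01}$ gives the second identity of (b).

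For the holomorphic outputs (the first two identities of (a) and identity (c)) the value lies in $\gm^{10}$, where \eqref{Nomizu} does not apply directly, and the metric identity does the work. Setting $X:=\Lambda(E_\alpha)E_\beta$ (resp. $\Lambda(E_\alpha)w$), I compute $h(X,\overline B)=-h(E_\beta,[E_\alpha,\overline B]^{01})$ (resp. $-h(w,[E_\alpha,\overline B]^{01})$) for $B$ ranging over $\{E_\gamma\}\cup\{V_a\}$, evaluating the bracket by the same root rules. For (a) every such pairing vanishes — the only potential term, $B=E_\alpha$ giving $H_\alpha^{01}\in\gf^{01}$, is killed by $h(\gf,\gn)=0$ — so nondegeneracy forces $X=0$. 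For (c) the single surviving term is $B=E_\alpha$, where $[E_\alpha,\overline{E_\alpha}]^{01}=-H_\alpha^{01}$ produces $h(X,\overline{E_\alpha})=h(w,H_\alpha)$, using $h(w,H_\alpha^{10})=0$ from $h(\gm^{10},\gm^{10})=0$; dividing by $h(E_\alpha,\overline{E_\alpha})=h_i$ yields the stated coefficient.

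Finally, part (d) follows by evaluating $\Lambda(v)$ on a basis of $\gm^c$ for $v\in\gf^{10}$: on an anti-holomorphic vector \eqref{Nomizu} gives $[v,\cdot]^{01}=[v,\cdot]=\ad(v)(\cdot)$, because $v\in\ga^c$ preserves each root line and each $V$-line; on a holomorphic vector the metric identity together with $[v,\overline B]=\ad(v)\overline B\in\gm^{01}$ shows that $\Lambda(v)$ and $\ad(v)$ have equal pairings against $\gm^{01}$, hence agree. Extending by conjugation, $\Lambda(\overline v)=\overline{\Lambda(v)}=\overline{\ad(v)}=\ad(\overline v)$, and by real linearity one gets $\Lambda(v)=\ad(v)$ for all $v\in\gf^c$. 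I expect the main obstacle to be organizational rather than deep: one must carefully separate the $(1,0)$- and $(0,1)$-valued outputs, since only the latter are supplied by \eqref{Nomizu}, and invoke metric compatibility together with the Hermitian symmetric bracket relations to pin down the former.
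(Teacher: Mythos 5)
Your proposal is correct and follows essentially the same route as the paper: relation \eqref{Nomizu} handles all $(0,1)$-valued outputs directly, while the $(1,0)$-valued outputs ($\Lambda(E_\alpha)E_\beta$, $\Lambda(E_\alpha)E_\gamma$, $\Lambda(E_\alpha)w$, $\Lambda(v)E_\alpha$) are pinned down by pairing against $\gm^{01}$ via metric compatibility and the Hermitian symmetric bracket relations ($[\gn_i,\gn_i]\subseteq\gk_i$, $[E_\alpha,E_{-\alpha}]=H_\alpha$, commuting simple factors, $h(\gf,\gn)=0$). All signs and coefficients check out, including the key computation $h(\Lambda(E_\alpha)w,\overline{E_\alpha})=h(w,H_\alpha)$ yielding the factor $\tfrac{1}{h_i}$.
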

\begin{proof}
	a)  If $A\in\gn^{10}$ and $w\in\gf^{10}$, we have by (\ref{Nomizu}) and using the fact that $[\gn_i,\gn_i]\subseteq\gk_i$:
	\begin{equation*}
	\begin{split}
	&h(\L(E_\a)E_\b,\overline{A})=-h(E_\b,\L(E_\a)\overline{A})=-h(E_\b,[E_\a,\overline{A}]^{01})=0,\\
	&h(\L(E_\a)E_\b,\overline{w})=-h(E_\b,\L(E_\a)\overline{w})=-h(E_\b,[E_\a,\overline{w}]^{01})=\a(\overline{w})h(E_\b,E_\a^{01})=0,
	\end{split}
	\end{equation*}
	therefore $\L(E_\a)E_\b=0$. Similarly we get $\L(E_\a)E_{\pm\g}=0$. Finally
	$$
	\L(E_\a)\overline{w}=[E_\a,\overline{w}]^{01}=-\a(\overline{w})E_\a^{01}=0.
	$$
	
	b) We have $\L(E_\a)E_{-\b}=[E_\a,E_{-\b}]^{01}$. If $\a\neq \b$, then $[E_\a,E_{-\b}]$ lies in the semisimple part of $\gk_i^c$, hence $[E_\a,E_{-\b}]^{01}= 0$. If $\a=\b$, $[E_\a,E_{-\a}]^{01}=H_\a^{01}$.
	
	c) Since we have, by a) and b), $h(\L(E_\a)w,E_{-\a})=-h(w,\L(E_\a)E_{-\a})=-h(w,H_\a)$, and $h(\L(E_\a)w,E_{-\b})=h(\L(E_\a)w,E_{-\g})=h(\L(E_\a)w,\overline{w'})=0$ (here $\b\in R_{\gn_i}^+$, $\b\neq\a$, $w'\in\gf^{10}$), the assertion follows.
	
	d) First we have, if $w,w'\in\gf^{10}$, $\L(w)\overline{w'}=[w,\overline{w'}]^{01}=0=\ad(w)\overline{w'}$ and $\L(w)E_{-\a}=[w,E_{-\a}]^{01}=-\a(w)E_{-\a}=\ad(w)E_{-\a}$. Furthermore,
	\begin{equation*}
	\begin{split}
	& h(\L(w)E_{\a},\overline{w'})=-h(E_\a,\L(w)\overline{w'})=0=\a(w)h(E_\a,\overline{w'})=h(\ad(w)E_\a,\overline{w'}),\\
	& h(\L(w)E_\a,E_{-\b})=-h(E_\a,\L(w)E_{-\b})=\b(w)h(E_\a,E_{-\b})=\d_{\a,\b}\a(w)h(E_\a,E_{-\a})\\
	&\qquad\qquad\qquad\qquad=\a(w)h(E_\a,E_{-\b})=h(\ad(w)E_\a,E_{-\b}),
	\end{split}
	\end{equation*}
	therefore $\L(w)E_\a=\ad(w)E_\a$. At this point it is easily seen that $\L(w)w'=0=\ad(w)w'$, so $\L(w)=\ad(w)$. Conjugation yields also $\L(\overline{w})=\ad(\overline{w})$, hence assertion d) follows.
\end{proof}

Using the previous Lemma we can compute the torsion tensor as follows.
\begin{lemma}\label{Tor2} 
	Given $\alpha,\beta\in R_{\gn_i}^+$, $\gamma\in R_{\gn_j}^+$ with $i\neq j$, and $w,w'\in\gf^{10}$ we have
	\begin{itemize}
		\item[a)] $T(E_\a,E_\g) = T(E_\a,E_{\b}) = 0$; 
		\item[b)] $T(w,E_\a) = -\frac{1}{h_i}h(w,H_\a) E_\a$;
		\item[c)] $T(w,w') = 0$.
	\end{itemize}
\end{lemma}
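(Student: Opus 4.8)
The plan is to compute the torsion directly from the defining formula \eqref{tor}, namely $T(X,Y) = \Lambda(X)Y - \Lambda(Y)X - [X,Y]_\gm$, substituting the values of the Nomizu operator $\Lambda$ just established in Lemma \ref{Lambda}. All three assertions reduce to assembling already-known pieces, so the main effort is bookkeeping of the brackets in the complexified Lie algebra $\gg^c$ together with the projection onto $\gm$.

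For part a), consider first $T(E_\a,E_\g)$ with $\a\in R_{\gn_i}^+$ and $\g\in R_{\gn_j}^+$, $i\neq j$. By Lemma \ref{Lambda}a) we have $\Lambda(E_\a)E_\g = 0$, and symmetrically $\Lambda(E_\g)E_\a = 0$ (applying a) with the roles of $i$ and $j$ exchanged). It then remains to check the bracket term $[E_\a,E_\g]_\gm$. Since $\a\in R_i$ and $\g\in R_j$ with $i\neq j$, the root vectors lie in the commuting simple factors $\gg_i^c$ and $\gg_j^c$, so $[E_\a,E_\g]=0$ and hence $T(E_\a,E_\g)=0$. For $T(E_\a,E_\b)$ with both $\a,\b\in R_{\gn_i}^+$, Lemma \ref{Lambda}a) again gives $\Lambda(E_\a)E_\b = \Lambda(E_\b)E_\a = 0$, while $[E_\a,E_\b]$ lies in $\gk_i^c$ (because $[\gn_i,\gn_i]\subseteq\gk_i$) and therefore has trivial $\gm$-component, so $[E_\a,E_\b]_\gm=0$ and $T(E_\a,E_\b)=0$.

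For part b), I would write $T(w,E_\a) = \Lambda(w)E_\a - \Lambda(E_\a)w - [w,E_\a]_\gm$ for $w\in\gf^{10}$ and $\a\in R_{\gn_i}^+$. By Lemma \ref{Lambda}d) we have $\Lambda(w)E_\a = \ad(w)E_\a = [w,E_\a]$, and since $\gf\subseteq\gz(\gk)$ acts diagonally on the root spaces this equals $\a(w)E_\a\in\gm$, so $\Lambda(w)E_\a - [w,E_\a]_\gm = 0$. The surviving contribution is thus $-\Lambda(E_\a)w$, and Lemma \ref{Lambda}c) gives $\Lambda(E_\a)w = \frac{1}{h_i}h(w,H_\a)E_\a$, yielding exactly $T(w,E_\a) = -\frac{1}{h_i}h(w,H_\a)E_\a$. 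Finally part c): for $w,w'\in\gf^{10}$ we have $T(w,w') = \Lambda(w)w' - \Lambda(w')w - [w,w']_\gm$; by Lemma \ref{Lambda}d) the first two terms are $\ad(w)w' - \ad(w')w = [w,w'] - [w',w] $ and since $\gf$ is abelian every bracket here vanishes, so $T(w,w')=0$.

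The computation is entirely routine once Lemma \ref{Lambda} is in hand; the only subtlety worth flagging is the consistent use of the $\gm$-projection and the identity $[\gn_i,\gn_i]\subseteq\gk_i$ together with the fact that distinct simple factors commute, both of which are what make the bracket terms drop out. I do not anticipate a genuine obstacle, but the point requiring care is to verify in part b) that the $\ad(w)$ contribution cancels exactly against the $\gm$-bracket, so that the torsion is governed solely by the asymmetric term $\Lambda(E_\a)w$ coming from the fiber acting on the base.
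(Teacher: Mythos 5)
Your overall strategy is exactly the paper's: plug the values of $\Lambda$ from Lemma \ref{Lambda} into \eqref{tor} and track the bracket terms. Parts b) and c), and the $T(E_\a,E_\g)$ half of part a), are correct and match the paper's computation step for step.

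There is, however, one step in part a) whose justification is wrong as stated, even though the conclusion is right. You argue that $[E_\a,E_\b]\in\gk_i^c$ ``and therefore has trivial $\gm$-component.'' That inference is not valid in this setting: $\gk_i=\gs_i\oplus\bR Z_i$, and while $\gs_i\subseteq\gl$, the central element $Z_i$ generally has a nonzero component in $\gf\subseteq\gm$ (this is precisely why $\Lambda(E_\a)E_{-\a}=H_\a^{01}$ is \emph{nonzero} in Lemma \ref{Lambda}(b), and why the fiber evolves nontrivially under the flow). So membership in $\gk_i^c$ alone does not kill the $\gm$-projection. The correct argument, which is what the paper uses, exploits that $[E_\a,E_\b]$ lies in the root space $\gg_{\a+\b}$, on which $\ad(Z_i)$ acts by the nonzero scalar $(\a+\b)(Z_i)=2\sqrt{-1}$; since the symmetric-space condition gives $[\gn_i,\gn_i]\subseteq\gk_i$, which is annihilated by $\ad(Z_i)$, one concludes $[E_\a,E_\b]=0$ outright (equivalently, $\a+\b$ is never a root for two positive noncompact roots of a Hermitian symmetric pair). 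With that substitution your proof is complete and coincides with the paper's.
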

\begin{proof}
	a) From Lemma \ref{Lambda}(a) and (\ref{tor}), we see that
	$$
	T(E_\a,E_\g)=-[E_\a,E_\g]_{\gm^c},\qquad T(E_\a,E_\b)=-[E_\a,E_\b]_{\gm^c}.
	$$
	Now $\a+\g$ cannot be a root, therefore $[E_\a,E_\g]=0$. On the other hand, since $G_i/K_i$ is symmetric, $[E_\a,E_\b]\in\gk_i\cap \gn_i=0$. This proves the assertion.
	
	b) By Lemma \ref{Lambda}(c)-(d) and (\ref{tor}) we have
	\begin{equation*}
	\begin{split}
	T(w,E_\a)&=\L(w)E_\a-\L(E_\a)w-[w,E_\a]_{\gm^c}\\
	&=\a(w)E_\a-\frac{1}{h_i}h(w,H_\a)\,E_\a-\a(w)E_\a=-\frac{1}{h_i}h(w,H_\a)\,E_\a.
	\end{split}
	\end{equation*}
	
	c) It follows immediately from Lemma \ref{Lambda}(d).
\end{proof}

\noindent In order to compute the curvature tensor $R$, we use the general formula given in \cite[p. 192]{KN} (cf. also \cite{P}): for $X,Y\in \gm$ 
\beq\label{Chern_Curvature}
R(X,Y)=[\L(X),\L(Y)]-\L([X,Y]_{\gm})-\ad([X,Y]_\gl).
\eeq

\begin{lemma}\label{Curvature}
	Given $\alpha,\beta\in R_{\gn_i}^+$, $\g\in R_{\gn_j}^+$, $i\neq j$, $v_1,v_2\in\gf^c$, $w\in\gf^{10}$, we have
	\begin{itemize}
		\item[a)] $R(E_\a,\overline{E_\a})E_\b=\L(E_\a)\L(\overline{E_\a})E_\b+\b(H_\a)E_\b$,
		\item[b)] $R(E_\a,\overline{E_\a})E_\g=0$,
		\item[c)] $R(E_\a,\overline{E_\a})w=\frac{1}{h_i}h(w,H_\a)\,\overline{H_\a^{01}}$,
		\item[d)] $R(v_1,v_2)=0$.
	\end{itemize}
\end{lemma}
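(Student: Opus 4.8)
The plan is to compute each curvature component directly from the master formula \eqref{Chern_Curvature}, specializing $X=E_\alpha$, $Y=\overline{E_\alpha}$ (or $X=v_1$, $Y=v_2$) and applying Lemma \ref{Lambda} term by term. The three ingredients in \eqref{Chern_Curvature} are the commutator $[\Lambda(X),\Lambda(Y)]$, the Nomizu term $\Lambda([X,Y]_\gm)$, and the isotropy term $\ad([X,Y]_\gl)$, so the first step is to identify $[E_\alpha,\overline{E_\alpha}]$ and split it into its $\gm$- and $\gl$-parts. Since $\overline{E_\alpha}=-E_{-\alpha}$ and $[E_\alpha,E_{-\alpha}]=H_\alpha\in\ga^c\subseteq\gk_i^c$, the bracket is purely isotropic: $[E_\alpha,\overline{E_\alpha}]_\gm=0$, which kills the middle term, while $[E_\alpha,\overline{E_\alpha}]_\gl=-H_\alpha$. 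This is the key simplification that makes the formula manageable.

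For part (a), I would expand $R(E_\alpha,\overline{E_\alpha})E_\beta=\Lambda(E_\alpha)\Lambda(\overline{E_\alpha})E_\beta-\Lambda(\overline{E_\alpha})\Lambda(E_\alpha)E_\beta-\ad([E_\alpha,\overline{E_\alpha}]_\gl)E_\beta$. By Lemma \ref{Lambda}(a) the second term vanishes because $\Lambda(E_\alpha)E_\beta=0$, and the isotropy term becomes $-\ad(-H_\alpha)E_\beta=\beta(H_\alpha)E_\beta$; this leaves exactly the stated expression $\Lambda(E_\alpha)\Lambda(\overline{E_\alpha})E_\beta+\beta(H_\alpha)E_\beta$. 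Part (b) follows the same way: $\Lambda(\overline{E_\alpha})\Lambda(E_\alpha)E_\gamma=0$ and $\Lambda(E_\alpha)\Lambda(\overline{E_\alpha})E_\gamma$ must be shown to vanish using that $E_\gamma\in\gn_j^c$ with $j\neq i$, while $\ad(H_\alpha)E_\gamma=\gamma(H_\alpha)E_\gamma$ with $\gamma(H_\alpha)=\kappa(H_\gamma,H_\alpha)=0$ since roots from different simple factors are $\kappa$-orthogonal. For part (c) the target is $w\in\gf^{10}$: here $\Lambda(E_\alpha)w=\tfrac{1}{h_i}h(w,H_\alpha)E_\alpha$ by Lemma \ref{Lambda}(c), so $\Lambda(\overline{E_\alpha})\Lambda(E_\alpha)w=\tfrac{1}{h_i}h(w,H_\alpha)\Lambda(\overline{E_\alpha})E_\alpha$, and $\Lambda(\overline{E_\alpha})E_\alpha=\overline{\Lambda(E_\alpha)E_{-\alpha}}=\overline{H_\alpha^{01}}$ by conjugating Lemma \ref{Lambda}(b); the term $\Lambda(E_\alpha)\Lambda(\overline{E_\alpha})w$ vanishes since $\Lambda(\overline{E_\alpha})w=\overline{\Lambda(E_\alpha)\overline w}=0$ by conjugating Lemma \ref{Lambda}(a), and $\ad(H_\alpha)w=0$ because $\gf$ is central in $\gk$. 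Part (d) is immediate from Lemma \ref{Lambda}(d): since $\Lambda(v)=\ad(v)$ for $v\in\gf^c$ and $\gf$ is abelian, all three terms in \eqref{Chern_Curvature} assemble into $\ad([v_1,v_2]_\gm)+\ad([v_1,v_2]_\gl)-\text{(same)}=0$.

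The main obstacle is the single nonzero composite $\Lambda(E_\alpha)\Lambda(\overline{E_\alpha})E_\beta$ that survives in part (a), together with the vanishing claim in part (b). For these I must understand $\Lambda(\overline{E_\alpha})E_\beta=[\overline{E_\alpha},E_\beta]^{01}=-[E_{-\alpha},E_\beta]^{01}$: this bracket lands in $\gk_i^c\oplus\gn_i^c$ (when $\beta-\alpha$ is a root it contributes to $\gn_i^c$, when $\beta=\alpha$ it gives an isotropy element with zero $\gm$-component), so I need to track exactly which part is of type $(0,1)$ and then apply $\Lambda(E_\alpha)$ to it. For (b), the point is that $[\overline{E_\alpha},E_\gamma]=0$ outright because $-\alpha+\gamma$ is never a root when $\alpha,\gamma$ come from different simple factors, forcing $\Lambda(\overline{E_\alpha})E_\gamma=0$ and hence the whole composite to vanish. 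Throughout I would use the Chevalley-basis relations $\overline{E_\alpha}=-E_{-\alpha}$ and $\kappa(E_\alpha,E_{-\alpha})=1$ together with the $\kappa$-orthogonality of root spaces from distinct factors to justify each vanishing, keeping in mind that the projections $(\cdot)^{01}$ and $(\cdot)_\gm$ must be applied carefully to elements of $\gk_i^c$ versus $\gn_i^c$.
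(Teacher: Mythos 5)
Your overall strategy is the paper's: specialize \eqref{Chern_Curvature} and feed in Lemma \ref{Lambda} term by term. But the step you single out as ``the key simplification'' --- that $[E_\alpha,\overline{E_\alpha}]_{\gm}=0$ because $H_\alpha\in\ga^c\subseteq\gk_i^c$ --- is false in this setting, and it is precisely the point where the computation requires care. The isotropy algebra is $\gl=\bigoplus_i\gs_i\oplus\gb$ with $\gb$ a \emph{proper} subspace of the center $\gz(\gk)=\bigoplus_i\bR Z_i$ whenever the fiber is nontrivial (e.g.\ $\gb=0$ for Calabi--Eckmann type spaces), while the complement of $\gb$ in $\gz(\gk)$ is $\gf\subset\gm$. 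Since $H_\alpha\equiv-\tfrac{\sqrt{-1}}{2n_i}Z_i\ (\mathrm{mod}\ \gs_i)$, the element $H_\alpha$ has a nonzero component $(H_\alpha)_{\gf^c}$ in general, so the Nomizu term $\L([E_\alpha,\overline{E_\alpha}]_{\gm^c})=-\L((H_\alpha)_{\gf^c})$ does \emph{not} drop out, and $[E_\alpha,\overline{E_\alpha}]_{\gl}$ is $-(H_\alpha)_{\gl^c}$, not $-H_\alpha$. If you carried your decomposition through literally, part (a) would produce $\beta((H_\alpha)_{\gl^c})E_\beta$ rather than $\beta(H_\alpha)E_\beta$. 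The repair is exactly what the paper does: keep both projections and invoke Lemma \ref{Lambda}(d), which gives $\L((H_\alpha)_{\gm^c})=\ad((H_\alpha)_{\gm^c})$ on all of $\gm^c$ (not just on $\gf^c$), so that the two surviving terms recombine into $\ad(H_\alpha)$; this yields $\beta(H_\alpha)E_\beta$ in (a), $\gamma(H_\alpha)E_\gamma=0$ in (b) (by $\kappa$-orthogonality of distinct simple ideals, as you say), and $[H_\alpha,w]=0$ in (c). Your final formulas are correct, but only because your erroneous bookkeeping happens to attribute the full $\ad(H_\alpha)$ to the isotropy term.

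There is also a sign slip in (c): conjugating Lemma \ref{Lambda}(b) with $\overline{E_{-\alpha}}=-E_{\alpha}$ gives $\L(\overline{E_\alpha})E_\alpha=-\overline{H_\alpha^{01}}$, not $+\overline{H_\alpha^{01}}$. Since the surviving term of the commutator is $-\L(\overline{E_\alpha})\L(E_\alpha)w$, your stated value would produce $-\tfrac{1}{h_i}h(w,H_\alpha)\overline{H_\alpha^{01}}$, the negative of the claim; with the corrected sign the two minuses cancel and the lemma's formula comes out. The rest of your argument --- the vanishing of $\L(\overline{E_\alpha})E_\gamma$ across distinct simple factors, the vanishing of $\L(E_\alpha)\L(\overline{E_\alpha})w$, and part (d) via $\L|_{\gf^c}=\ad$ on the abelian algebra $\gf^c$ --- is sound and matches the paper.
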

\begin{proof}
	a) Using (\ref{Chern_Curvature}) and Lemma \ref{Lambda} we have
	\begin{equation*}
	\begin{split}
	R(E_\a,\overline{E_\a})E_\b&=[\L(E_\a),\L(\overline{E_\a})]E_\b-\L([E_\a,\overline{E_\a}]_{\gm^c})E_\b-\ad([E_\a,\overline{E_\a}]_{\gl^c})E_\b\\
	&=\L(E_\a)\L(\overline{E_\a})E_\b+\L((H_\a)_{\gm^c})E_\b+\ad((H_\a)_{\gl^c})E_\b\\
	&=\L(E_\a)\L(\overline{E_\a})E_\b+[H_\a,E_\b]\\
	&=\L(E_\a)\L(\overline{E_\a})E_\b+\b(H_\a)E_\b.
	\end{split}
	\end{equation*}
	\indent b) is proved similarly.
	
	c-d) We have, using (\ref{Chern_Curvature}), Lemma \ref{Lambda} and the fact that $\gf^c$ is abelian:
	\begin{equation*}
	\begin{split}
	R(E_\a,\overline{E_\a})w&=-\L(\overline{E_\a})\L(E_\a)w+\L((H_\a)_{\gm^c})w+\ad((H_\a)_{\gl^c})w\\
	&=-\frac{1}{h_i}h(w,H_\a)\,\L(\overline{E_\a})E_\a+[H_\a,w]
	=\frac{1}{h_i}h(w,H_\a)\,\overline{H_\a^{01}}
	\end{split}
	\end{equation*}
	and, for similar reasons,
	$$
	R(v_1,v_2)=[\L(v_1),\L(v_2)]=[\ad(v_1),\ad(v_2)]=\ad([v_1,v_2])=0.
	$$
\end{proof}

We can now compute the second Chern-Ricci tensor $S$. If $\b\in R_{\gn_i}^+$ we have by Lemmas \ref{Lambda}, \ref{Curvature}:
\begin{equation*}
\begin{split}
S(E_\b,&\overline{E_\b})=\sum_{j=1}^{s}\sum_{\a\in R_{\gn_j}^+}\frac{1}{h_j}h(R(E_\a,\overline{E_\a})E_\b,\overline{E_\b})
\\
&=\sum_{\a\in R_{\gn_i}^+}\frac{1}{h_i}h(R(E_\a,\overline{E_\a})E_\b,\overline{E_\b})
=-\sum_{\a\in R_{\gn_i}^+}\frac{1}{h_i}h(\L(\overline{E_\a})E_\b,\L(E_\a)\overline{E_\b})+\sum_{\a\in R_{\gn_i}^+}\b(H_\a)\\
&=-\frac{1}{h_i}h(\overline{\L(E_\b)E_{-\b}},\L(E_\b)E_{-\b})+\sum_{\a\in R_{\gn_i}^+}\b(H_\a)
=-\frac{1}{h_i}h(\overline{H_\b^{01}},H_\b^{01})+\sum_{\a\in R_{\gn_i}^+} \b(H_\a)\\
&=-\frac{1}{h_i}h(\overline{H_\b^{01}},H_\b^{01})+\frac{1}{2}
\end{split}
\end{equation*}
where we have used that $\sum_{\a\in R_{\gn_i}^+}H_\a=-\frac{\sqrt{-1}}{2}Z_i$ and that $\b(Z_i)=\sqrt{-1}$. Similarly, for $a,b=1,\dots,k$,
\begin{equation*}
\begin{split}
S(V_a,\overline{V_b})&=\sum_{j=1}^{s}\sum_{\a\in R_{\gn_j}^+}\frac{1}{h_j}h(R(E_\a,\overline{E_\a})V_a,\overline{V_b})
\\
&=\sum_{j=1}^{s}\sum_{\a\in R_{\gn_j}^+}\frac{1}{h_j^2}h(V_a,H_\a)h(\overline{V_b},\overline{H^{01}_\a})
=\sum_{j=1}^{s}\sum_{\a\in R_{\gn_j}^+}\frac{1}{h_j^2}h(V_a,H_\a)\overline{h(V_b,H_\a)}.           
\end{split}
\end{equation*}

We now compute the tensor $Q(T)$. For $\a\in R^+_{\gn_i}$, $i=1,\dots,s$, set
$$
e_\a:=\frac{E_\a}{\sqrt{h_i}}.$$
We fix a $h$-orthonormal basis $\{e_a\}_{a=1,\ldots,k}$ of $\gf^{10}$. 
In the following, we shall use the greek letters $\a,\b,\dots$ as indices varying among the positive roots, while the lowercase latin letters $a,b,\dots$ will denote indices varying in the set $\{1,\dots,k\}$. Finally, latin capital letters $A,B,\dots$ will vary both among the positive roots and the elements of the set $\{1,\dots,k\}$. So we have, if $\b\in R^+_{\gn_i}$,
\begin{equation*}
\begin{split}
Q(T)(e_\b,\overline{e_\b})&=-\frac{1}{2}\sum_{A,B} T^\b_{AB}\overline{T}^\b_{AB}
=-\frac{1}{2}\sum_{\a,b} T^\b_{\a b}\overline{T}^\b_{\a b}-\frac{1}{2}\sum_{\a,b}T^\b_{b\a}\overline{T}^\b_{b\a}\\
&=-\sum_{\a,b}T^\b_{\a b}\overline{T}^\b_{a b}=-\sum_b\frac 1{h_i^2}|h(e_b,H_\b)|^2,
\end{split}
\end{equation*}
whence 
$$Q(T)(E_\b,\overline{E_\b})=-\frac{1}{h_i}\sum_b|h(e_b,H_\b)|^2 = 
-\frac{1}{h_i}h(\overline{H_\b^{01}},H_\b^{01}).$$
The last equality in the previous formula holds noting that, if one writes $H_\b^{01}=\sum_a\l_a\overline{e}_a$ for suitable $\l_a\in\C$, then 
$$\sum_{b=1}^k |h(e_b,H_\b)|^2= \sum_{b=1}^k|\l_b|^2=  h(\overline{H_\b^{01}},H_\b^{01}). $$

Moreover, using Lemma \ref{Tor2} we immediately see that $Q(T)(\gf,\gf) = 0$ and $Q(T)(\gf,\gn)=0$.\par 
 We can now write an expression for the tensor 
$$\mathcal{K}(h) := -S(h) + Q(T),$$
which governs the flow \eqref{flow}. Namely,
\beq \label{K} \begin{cases} 
	\mathcal{K}(h)(E_\a,\overline{E_\a}) = -\frac 12 ,\quad \a\in R_{\gn}^+;\\ \mathcal{K}(h)(\gf,\gn) =0;  \\
\mathcal{K}(h)(V_a,\overline{V_b}) = -\sum_{j=1}^{s}\frac{1}{h_j^2}\sum_{\a\in R_{\gn_j}^+}h(V_a,H_\a)\overline{h(V_b,H_\a)}. \end{cases}\eeq

\section{The analysis of the flow and static metrics}

Starting from an invariant Hermitian metric $h_o$, the unique solution to the flow equation \eqref{flow} consists of $G$-invariant Hermitian metrics on $M$. Therefore using \eqref{K} we can write the flow equations as follows 

\beq\label{InvHCF_gen}
\begin{cases}
	h_{a\overline{b}}'=-\sum_{j=1}^{s}\frac{1}{h_j^2}\sum_{\a\in R_{\gn_j}^+}h(V_a,H_\a)\overline{h(V_b,H_\a)},\quad\text{for $a,b=1,\dots,k$},\\
	h_i'=-\frac{1}{2},\quad\text{for $i=1,\dots,s$}.
\end{cases}
\eeq

In order to write the equations in (\ref{InvHCF_gen}) relative to the fibre in a nicer form, set $R^+_{\gn_j}:=\{\a^j_1,\dots,\a^j_{n_j}\}$ for $j=1,\dots,s$. 
We note that for every $\a\in R_{\gn_j}^+$ ($j=1,\ldots,s$) we have $H_\a = -\frac{\sqrt{-1}}{2n_j}Z_j\ ({\rm{mod}}\ \gs_j)$, hence  we can find coefficients $c^j_i$, $j=1,\ldots,s$, $i=1,\ldots,k$ so that 
\beq\label{cijl}
H_{\a^j_i}^{01}=\sum_{l=1}^k c^j_{l}\overline{V_{l}}\qquad i=1,\ldots,n_j,\ j=1,\ldots,s.
\eeq
Thus 
\begin{equation*}
h_{a\bar b}'= -\sum_{j=1}^s \frac 1{h_j^2}\sum_{l,m=1}^k \left( n_j
c^j_{l}\overline{c^j_{m}}\right) h_{a\bar l} h_{m\bar b} =  
-\sum_{j=1}^s \frac 1{h_j^2}\left( H\Gamma^j H\right)_{a\bar b},
\end{equation*}
where we have set for $j=1,\ldots,s$
$$(\Gamma^j)_{l\bar m} := n_j c^j_{l}\overline{c^j_{m}}.$$

Therefore we can write (\ref{InvHCF_gen}) as
\beq\label{InvHCF_gen_matrix}
\begin{cases}
	H'=-H\Gamma H\\
	h_i'=-\frac{1}{2},\quad\text{for $i=1,\dots,s$},
\end{cases}
\eeq
where 
$$\Gamma(t) := \sum_{j=1}^s \frac 1{h_j^2(t)} \Gamma^j.$$
We note that $\Gamma$ is positive semidefinite as each $\Gamma^j$ is so, $j=1,\ldots,s$.

The metric $h_o$ can be fully described by $s$ positive numbers $A_1,\ldots,A_{s}$ where 
\beq\label{Ai}h_o|_{\gn_i\times \gn_i} = -A_i\kappa\eeq 
and by a positive definite Hermitian $k\times k$ matrix $H_o$, which represents $h_o|_{\gf\times\gf}$ w.r.t. the basis $\mathcal V$. From \eqref{InvHCF_gen_matrix} we immediately see that 
\beq\label{solhi} h_i(t) = -\frac 12 t + A_i,\quad i=1,\ldots, s.\eeq
If we set $A:= \min_{i=1,\ldots,s}\{A_i\}$, we see that $h_i(t)$ are all positive when $t\in [0,2A)$. The flow equation boils down to 
\beq \begin{cases} H' = -H\Gamma H\\ H(0) = H_o\end{cases}\eeq 
that can be explicitely integrated to 
\beq\label{H} H^{-1}(t) = H_o^{-1} + \int_0^t \Gamma(u)\, du.\eeq
Note that the righthandside of \eqref{H} is positive definite for all $t\in [0,2A)$, therefore the solution $h(t)$ to the $\text{HCF}_{\text{U}}$ exists on $[0,2A)$. Moreover we notice that the maximal existence domain of $h(t)$ is of the form $(-r,2A)$, where $r\in (0,+\infty]$. \par 
In order to analyze the behaviour of the metric along the fiber when $t$ approaches the limit $2A$, we simply observe that \eqref{InvHCF_gen} implies
\beq\label{decreasing}
h(v,\bar v)'=-\sum_{j=1}^s\frac{1}{h_j^2}\sum_{\a\in R^+_{\gn_j}}|h(v,H_\a)|^2\leq 0,\qquad\text{for any $v\in\gf^{10}$},
\eeq
therefore
$
\lim_{t\rightarrow 2A}h(v,\bar v)
$
exists and is non-negative. Thus when $t\rightarrow 2A$ the metric along the fiber converges to a positive semidefinite Hermitian form $\hat h$. 
\begin{proposition}\label{prop} There is a compact normal subgroup $S$ of $G$ such that the orbits of $S^c$ are the leaves of the distribution defined by the kernels of $\hat{h}$.
\end{proposition}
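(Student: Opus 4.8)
The plan is to pin down the kernel of the limit form $\hat h$ inside $\gm$ and to recognise it as the distribution tangent to the orbits of $S^c$, where $S:=\prod_{i\in J_0}G_i$ and $J_0:=\{i : A_i=A\}$ with $A=\min_i A_i$. Since $\gr:=\Lie(S)=\bigoplus_{i\in J_0}\gg_i$ is a sum of simple ideals of $\gg$, it is an ideal, so $S$ is a connected compact normal subgroup of $G$. As $\hat h$ is a limit of $\ad(\gl)$-invariant Hermitian forms it is itself $\ad(\gl)$-invariant and $J$-invariant, hence the distribution $\mathcal D$ defined by its kernel is $G$-invariant and determined by its value $\mathcal D_o=\ker\hat h\subseteq\gm$ at $o$. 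I would compute $\mathcal D_o$ separately on the base and on the fibre.

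On the base directions $\gn=\bigoplus_i\gn_i$, formula \eqref{solhi} gives $\hat h|_{\gn_i\times\gn_i}=(A_i-A)(-\kappa_i)$, which vanishes precisely when $i\in J_0$ and is positive definite otherwise. Combined with $\hat h(\gn_i,\gn_{i'})=0$ for $i\neq i'$ and $\hat h(\gf,\gn)=0$, this shows that the base part of $\mathcal D_o$ is exactly $\bigoplus_{i\in J_0}\gn_i$.

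For the fibre I would analyse the blow-up of \eqref{H}. Writing $\mathbf c^j=(c^j_1,\dots,c^j_k)$ as in \eqref{cijl}, each $\Gamma^j=n_j\,\mathbf c^j(\mathbf c^j)^\ast$ is positive semidefinite of rank $\le 1$, and with $q_j(t):=\int_0^t h_j^{-2}(u)\,du$ we have $H^{-1}(t)=H_o^{-1}+\sum_j q_j(t)\,\Gamma^j$; by \eqref{solhi}, $q_j(t)\to+\infty$ as $t\to 2A$ exactly when $j\in J_0$, while $q_j(2A)<\infty$ otherwise. A short variational argument — using $v^\ast M^{-1}v=\max_x\big(2\,\re(v^\ast x)-x^\ast M x\big)$ for $M>0$ and letting the divergent terms force the optimiser orthogonal to the blow-up directions — then shows that $\hat h|_\gf$ degenerates exactly along the span of the $\mathbf c^j$ with $j\in J_0$, that is
$$\ker\big(\hat h|_{\gf^c}\big)=\Span_{\bC}\{\,H_\alpha^{01},\ \overline{H_\alpha^{01}}\ :\ \alpha\in R_{\gn_j}^+,\ j\in J_0\,\}.$$
Since $H_\alpha\equiv-\tfrac{\sqrt{-1}}{2n_j}Z_j\ (\mathrm{mod}\ \gs_j)$ for $\alpha\in R_{\gn_j}^+$ and $\gs_j\subseteq\gl$, one has $H_\alpha^{01}=-\tfrac{\sqrt{-1}}{2n_j}(Z_j)_\gf^{01}$; hence this kernel is precisely the complexification of the smallest $I_F$-invariant real subspace $U\subseteq\gf$ containing the $\gf$-components $(Z_j)_\gf$, $j\in J_0$.

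Finally I would match $\mathcal D_o$ with the orbit distribution. Projecting $\gg_i=\gs_i\oplus\bR Z_i\oplus\gn_i$ onto $\gm=\gf\oplus\gn$ gives $(\gg_i)_\gm=\gn_i\oplus\bR(Z_i)_\gf$, so the distribution generated at $o$ by the fundamental fields $\{X^\ast,(JX)^\ast : X\in\gr\}$ is $\bigoplus_{i\in J_0}\gn_i\oplus U$ with $U=\Span_\bR\{(Z_i)_\gf : i\in J_0\}+I_F\Span_\bR\{(Z_i)_\gf : i\in J_0\}$, which by the two previous steps equals $\mathcal D_o$. This distribution is involutive: since $G$ acts holomorphically, $X\mapsto(X^\ast)^{10}$ maps $\gr^c$ into holomorphic vector fields, and as $\gr$ is an ideal its image is closed under bracket, so the $(1,0)$-part of $\mathcal D$ is involutive and, $J$ being integrable, so is $\mathcal D$; its maximal integral leaves are by construction the orbits of $S^c$. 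The hard part is the fibre step: controlling $\lim_{t\to 2A}\big(H^{-1}(t)\big)^{-1}$ when several $h_j(t)$ vanish simultaneously, showing the kernel is exactly the blow-up span (neither larger nor smaller), and tracking the $(1,0)/(0,1)$ bookkeeping carefully enough to see that it coincides with the $\gf$-part of the $\gr^c$-orbit rather than a conjugate of it.
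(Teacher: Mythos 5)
Your proposal is correct, and its architecture is the same as the paper's: reduce to the point $o$ using $G$-invariance and $J$-stability of the kernel distribution, split into base and fibre parts, identify the fibre part by analysing the blow-up of $H^{-1}(t)$ in \eqref{H}, and match the result with $T_o(S^c\cdot o)=\bigoplus_{i\in J_0}\gn_i\oplus\Span\{(Z_i)_\gf,\ I_F(Z_i)_\gf\}$. (The paper reorders the indices so that $J_0=\{1,\dots,p\}$ and calls the subgroup $U=G_1\cdots G_p$.) The base computation via \eqref{solhi} and the final orbit identification are exactly the paper's steps.

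The one point of genuine divergence is the fibre computation, which is where the paper concentrates its effort (Lemma \ref{q}). The paper isolates $\Theta_p=\sum_{j\le p}\Gamma^j$, conjugates by a unitary diagonalizing it, and computes $\lim_{t\to 2A}H(t)$ explicitly through the adjugate and the determinant, arriving at a block matrix whose zero block has size $q=\dim_{\bC}\mathcal Z_p$. You instead propose monotonicity of $t\mapsto H^{-1}(t)$ in the Loewner order together with the variational identity $v^*M^{-1}v=\max_x\bigl(2\re(v^*x)-x^*Mx\bigr)$. This does work: for $v=\Gamma^ju$ with $j\in J_0$ the Cauchy--Schwarz estimate gives $v^*H(t)v\le q_j(t)^{-1}u^*\Gamma^ju\to 0$, so $\ker\hat h^F\supseteq\sum_{j\in J_0}\mathrm{Im}\,\Gamma^j$, while testing with $x=cv'$ for $v'$ orthogonal to that span (so that $\Gamma^jv'=0$ for $j\in J_0$ and $v'^*H^{-1}(t)v'$ stays bounded) gives the reverse inclusion. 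This route buys you a coordinate-free argument that avoids the unitary conjugation and the rank bookkeeping, at the price of being only sketched in your write-up; to make it a complete proof you would need to write out these two inequalities explicitly. Everything else, including the remark that the answer is conjugation-invariant so the $(1,0)/(0,1)$ identification is harmless, matches the paper.
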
 	
	
\begin{proof} We consider the distribution $\mathcal Q$ on $M$ which is defined for $x\in M$  by $\mathcal Q_x :=\{v\in T_xM|\ \hat h_x(v,w)=0,\ \forall  w\in T_xM\}$. It is clear that $\mathcal Q$ is $G$-invariant and $J$-stable, so that it is enough to study it at $o:= [L]\in M=G/L$, where we can see $\gq:= \mathcal Q_o$ as a $J$-stable subspace of $\gm$. We write $\gq^c = \gq^{10}\oplus \gq^{01}$.  We rearrange the indices so that $A=A_1=\ldots=A_p<A_i$ for $i=p+1,\dots,s$ and we define $\mathcal{Z}_p$ to be the complex subspace of $\gf^{01}$ generated by $Z_1^{01},\dots,Z_p^{01}$. The limit form $\hat h$ is described by a pair $(\hat h^N,\hat h^F)$, where $\hat h^N$ is a $\Ad(L)$-invariant Hermitian form on $\gn$ whose kernel is given by $\gn_1\oplus\ldots\oplus \gn_p$ and $\hat h^F$ is a positive semidefinite Hermitian form on $\gf$.
\begin{lemma}\label{q} $\gq^c = \gn_1^c\oplus\ldots\oplus \gn_p^c\oplus \mathcal Z_p\oplus \overline{\mathcal Z_p}$.
	\end{lemma}	
\begin{proof} It is enough to prove that $\mathcal Z_p = (\ker \hat h^F)^{01}$. Throughout the following we will identify $\gf^{01}$ with $\C^k$ by means of the basis $\overline{\mathcal{V}}$. Observe that \eqref{H} reads
	$$
	H^{-1}(t) = H_o^{-1} -\frac{2t}{A(t-2A)}\Theta_p +\sum_{j=p+1}^s\left(\int_0^t\frac{1}{h_j^2(u)}du\right)\Gamma^j,
	$$
	where $\Theta_p:=\sum_{j=1}^p\Gamma^j$. Since the image of $\Gamma^j$ is spanned by $Z_j^{01}$ for $j=1,\dots,k$, we see that $\Theta_p(\mathcal{Z}_p)\subseteq\mathcal{Z}_p$. Moreover, as each $\Gamma^j$ is Hermitian positive semidefinite, we have that $\ker \Theta_p = \bigcap_{j=1}^p \ker \Gamma^j = \mathcal Z_p^\perp$, where the orthogonal space $\mathcal Z_p^\perp$ is taken with respect to the standard Hermitian structure on $\mathbb C^k$. This implies that $\ker \Theta_p \cap \mathcal Z_p = \{0\}$ and therefore $\Theta_p(\mathcal Z_p) = \mathcal Z_p$. We also observe that $\Theta_p$ is diagonalizable and therefore its image $\mathcal Z_p$ is the sum of all eigenspaces with non-zero eigenvalues. If we set $q:= \dim_{\small{\bC}}\mathcal Z_p$, there exists $U\in \U(k)$ so that 
$$\Delta:= U\Theta_p \bar U^T$$
is the diagonal matrix ${\rm{diag}}(\mu_1,\ldots,\mu_q,0,\ldots,0)$, $\mu_i\neq 0$ for $i=1,\ldots,q$. Then if we set $H_u(t) := UH(t)\bar U^T$ we have for $t\in [0,2A)$
\beq\label{flow_u} H_u^{-1}(t) = \Lambda(t) - \frac{2t}{A(t-2A)}\Delta,\eeq
where $\Lambda(t)$ is positive definite for all $t\in[0,2A)$ and, when $t\to 2A$, it converges to a positive definite matrix  
$$\hat \Lambda := \lim_{t\rightarrow 2A} \Lambda(t).$$
We have 
$$H_u(t) = \frac{ {\rm {adj}}(H_u^{-1}(t))}{\det H_u^{-1}(t)   }$$
and using \eqref{flow_u} we see that
$$
\lim_{t\rightarrow 2A}(2A-t)^q \det H_u^{-1}(t)
=4^q\mu_1\cdot\ldots\cdot\mu_q \det \hat{\L}_o> 0
$$
where $\hat{\L}_o$ is the minor of $\hat{\L}$ obtained intersecting the last $k-q$ rows and the last $k-q$ columns. Moreover
$$
\lim_{t\rightarrow 2A}(2A-t)^q {\rm {adj}}(H_u^{-1}(t))_{a\bar b}=0,\quad\forall\,a\in\{1,\dots,q\},\;\forall\,b\in\{1,\dots,k\},
$$
hence
$$
\lim_{t\rightarrow 2A} (H_u(t))_{a\bar b}=0,\quad\forall\,a\in\{1,\dots,q\},\;\forall\,b\in\{1,\dots,k\},
$$
while for $a,b=q+1,\dots,k$, 
$$
\lim_{t\rightarrow 2A} (H_u(t))_{a\bar b}
=(\hat{\L}_o^{-1})_{(a-q)\overline{(b-q)}}.
$$
Thus we have that
$$
\lim_{t\rightarrow 2A}H_u(t)
 = \left(\!
\begin{array}{cc}
0 & 0\\
0 & \hat{\L}_o^{-1}
\end{array}\!
\right)\!,
$$
where $\hat{\L}_o^{-1}$ is $(k-q)\times (k-q)$ and positive definite. The claim follows.
\end{proof} 
We define $U:= G_1\cdot\ldots\cdot G_p$.  We observe that the universal complexification $U^c$ acts on $M$ and that the $U^c$-orbits define a $G$-invariant foliation. At the point $o\in M$ we have that $T_o(U^c\cdot o) = \gn_1\oplus\ldots\oplus\gn_p\oplus \Span\{ (Z_1)_\gf,\ldots,(Z_p)_\gf, I_F((Z_1)_\gf),\ldots,I_F((Z_p)_\gf)\} = \gq$ by Lemma \ref{q}. Therefore the $U^c$-orbits coincide with the leaves of $\mathcal Q$.	
\end{proof} 

\begin{remark}
Since the complex structure $I_F$ along the fiber is totally arbitrary, the $U^c$-orbits are not necessarily closed. Note that $G$ acts transitively on the set of $U^c$-orbits and therefore one such orbit is closed if and only if all are so.
\end{remark}

\subsection{Static metrics} We say that an invariant Hermitian metric $h$ on $M$ is {\em static} for the $\text{HCF}_{\text{U}}$ if there exists $\l\in\R$ such that
\beq\label{static}
-\mathcal{K}(h) = \l h.
\eeq
In terms of algebraic data on $\gg$ this equation becomes
\beq\label{static_matrix}
\begin{cases}
\l H=H\Gamma H,\\
\l h_i=\frac 12, & \text{for $i=1,\dots,s$}.
\end{cases}
\eeq
This immediately implies  that \eqref{static} has no solution if $\l\leq 0$. On the other hand, if $\l>0$, the second equation in \eqref{static_matrix} gives
\beq\label{static1}
h_i=\frac{1}{2\l}>0,\quad\text{for $i=1,\dots,s$}.
\eeq
 Then the first equation in \eqref{static_matrix} reads    
\beq\label{static2}
H=\frac{1}{4\l}\Theta_s^{-1},
\eeq 
where we note  that $\Theta_s$ is invertible as it is shown in the proof of Lemma \ref{q}. 
Using \eqref{static1}, \eqref{static2} we can therefore define, for any $\l>0$, a static metric satisfying \eqref{static} for the chosen $\l$.

Suppose now that in \eqref{Ai} the initial conditions satisfy $A=A_1=\cdots=A_s$. Clearly in this case we have 
$
\lim_{t\rightarrow 2A}h_i(t)=0$ for $i=1,\dots,s
$
and 
$$\int_0^t \Gamma(u)du = \frac{2t}{A(2A-t)}\Theta_s,\qquad t\in [0,2A).$$
On the other hand we have 
$$
\lim_{t\rightarrow 2A} H(t)
  =\lim_{t\rightarrow 2A}\frac{{\rm{adj}}(H^{-1}(t))}{\det H^{-1}(t)}=0,
$$
as one can easily verify that 
\beq\label{lim}\begin{cases}
\lim_{t\rightarrow 2A} (2A-t)^k \det H^{-1}(t)
  =4^k\det\Theta_s>0,\\
\lim_{t\rightarrow 2A} (2A-t)^{k-1}{{\rm{adj}}(H^{-1}(t))}=4^{k-1}{\rm{adj}}(\Theta_s).
\end{cases}\eeq
Thus
$$
\lim_{t\rightarrow 2A}h(t)=0.
$$

We now consider the Hermitian metric $\tilde h(t)$ which is homothetic to $h(t)$ and has unitary volume, namely $\tilde h(t):= c(t) h(t)$ with $c(t) = (\vol_{h(t)}(M))^{-1/m}$. Now we see that there exists a positive constant $V$ so that 
$$
\vol_{h(t)}(M)=V\cdot \det H(t)\cdot \prod_{i=1}^s(2A-t)^{n_i}
=V\cdot\frac{(2A-t)^{m}}{(2A-t)^k\det H^{-1}(t)},$$
where we have used that $m=k+\sum_{i=1}^sn_i$. We can then write 
$$c(t)=\frac{\xi(t)}{2A-t},\qquad 
\xi(t):=\left(V^{-1}(2A-t)^k\det H^{-1}(t)\right)^{1/m}
$$ 
and note that 
by \eqref{lim}, 
$$
\lim_{t\rightarrow 2A}\xi(t)=\left(V^{-1}4^k\det\Theta_s\right)^{1/m}:= \xi>0.
$$  
We now have, for $i=1,\dots,s$,
$$
\lim_{t\rightarrow 2A}c(t)h_i(t)=\lim_{t\rightarrow 2A}\frac{\xi(t)}{2A-t}\cdot\frac{2A-t}{2}=\frac{\xi}{2}.
$$
while, using \eqref{lim}
$$ \lim_{t\rightarrow 2A}c(t)H(t)
=\frac{\xi}{4}\cdot \frac{{\rm {adj}}(\Theta_s)}{\det \Theta_s} = \frac \xi 4\cdot \Theta_s^{-1},
$$
Therefore the solution $\tilde h(t)$ to the normalized flow converges to the static metric satisfying  \eqref{static} with $\l=1/\xi$.\par \medskip

We can then formulate our first result as follows 
\begin{theorem}\label{Main} Let $M=G/L$ be a simply connected C-space with Tits fibration over a Hermitian symmetric space $N$ whose irreducible factors have complex dimension at least two and are not complex quadrics. Any $G$-invariant Hermitian metric $h$ on $M$ is determined by a pair $(h^N,h^F)$, where $h^N$ is an $\Ad(L)$-invariant Hermitian metric on $\gn$ and $h^F$ is an arbitrary Hermitian metric on the fiber $\gf$.\par 
 Given any initial invariant Hermitian metric $(h^N_o,h^F_o)$ on $M$, we have:
	\begin{itemize}
		\item[i)] the solution $h(t)$ to the $\text{HCF}_{\text{U}}$ is given by the pair $(h^N(t),h^F(t))$, where $h^N(t)$ 
	 depends only on $h^N_o$;
	 \item[ii)]  the maximal existence domain of $h^N(t)$ is an interval $(-\infty,T)$ with $0<T< +\infty$ and the solution $h^F(t)$ exists and is positive definite on an interval $(-r,T)$ with $0<r\leq+\infty$. When $t\to T$, the base $N$ collapses to a product of some Hermitian symmetric spaces and the metric $h(t)$ converges to a positive semidefinite Hermitian bilinear form $\hat h$. The distribution given by the kernel of $\hat h$ is integrable and its leaves coincide with the $U^c$-orbits for a suitable compact connected normal subgroup $U\subseteq G$;
	 \item[iii)] the manifold $M$ admits a unique (up to homotheties) invariant Hermitian metric which is static for the $\text{HCF}_{\text{U}}$. 
	 If $h_N(t)\rightarrow 0$ when $t\rightarrow T$, then $h(t)\rightarrow 0$ and the normalized flow with constant volume converges to a static metric.
	\end{itemize} 
\end{theorem}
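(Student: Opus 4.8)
The plan is to assemble the computations of Sections 2 and 3 into the three assertions, since the theorem essentially packages the preceding analysis. The parametrization of an invariant metric by a pair $(h^N,h^F)$ is the content of the structural discussion in Section 2: invariance forces $h(\gf,\gn)=0$, $h(\gn_i,\gn_j)=0$ for $i\neq j$ and $h|_{\gn_i\times\gn_i}=-h_i\kappa_i$, so the datum on $\gn$ is the tuple $(h_1,\dots,h_s)$ while $h|_{\gf\times\gf}$ is an arbitrary Hermitian form $H$. For assertion i) I would read off the decoupling directly from the matrix form \eqref{InvHCF_gen_matrix}: the equations $h_i'=-\tfrac12$ are autonomous and do not involve $H$, so $h^N(t)$ is the explicit solution \eqref{solhi} depending only on the initial data $A_i$, whereas the fibre equation $H'=-H\Gamma H$ involves the base through $\Gamma(t)=\sum_j h_j(t)^{-2}\Gamma^j$, which is exactly the asserted one-way dependence.

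For assertion ii) the explicit solutions do the work. The base metric $h_i(t)=A_i-\tfrac t2$ is positive precisely for $t<2A_i$, hence $h^N(t)$ is positive definite exactly on $(-\infty,T)$ with $T:=2A=2\min_i A_i\in(0,+\infty)$; as $t\to T$ the factors with $A_i=A$ satisfy $h_i(t)\to0$ while the others remain positive, giving the stated collapse of $N$ onto the product of the non-collapsing Hermitian symmetric factors. For the fibre I would invoke the closed form \eqref{H}: as each $\Gamma^j$ is positive semidefinite, $H^{-1}(t)=H_o^{-1}+\int_0^t\Gamma(u)\,du$ is positive definite throughout $[0,T)$, so $h^F(t)$ exists on a maximal interval $(-r,T)$, and the monotonicity \eqref{decreasing} guarantees convergence to a positive semidefinite limit $\hat h$ as $t\to T$. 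The identification of the kernel of $\hat h$ as an integrable distribution whose leaves are the $U^c$-orbits is precisely Proposition \ref{prop}, resting on the eigenspace analysis of Lemma \ref{q}, so I would simply quote it.

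For assertion iii) the static metrics are pinned down algebraically. Equation \eqref{static_matrix} forces $\lambda>0$, and then \eqref{static1}--\eqref{static2} give $h_i=\tfrac1{2\lambda}$ and $H=\tfrac1{4\lambda}\Theta_s^{-1}$; replacing $\lambda$ by $\lambda'$ multiplies the whole metric by $\lambda/\lambda'$, so the invariant metric static for the $\text{HCF}_{\text{U}}$ is unique up to homothety. For the convergence statement I would specialize to $A_1=\cdots=A_s=A$, so that $h^N(t)\to0$; the determinant and adjugate asymptotics \eqref{lim} then force $H(t)\to0$, hence $h(t)\to0$, and rescaling to unit volume produces the factor $c(t)=\xi(t)/(2A-t)$ with $\xi(t)\to\xi>0$. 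Passing to the limit in $c(t)h_i(t)$ and $c(t)H(t)$ reproduces exactly the static metric \eqref{static1}--\eqref{static2} with $\lambda=1/\xi$.

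The only genuinely substantial step is the one already carried out in Proposition \ref{prop} and Lemma \ref{q}, namely tracking which eigendirections of the fibre metric degenerate as $t\to T$ and recognizing the resulting kernel as the tangent space to a $U^c$-orbit; I expect this to be the main obstacle if one proves the statement from scratch. Everything else amounts to reading off the explicit solutions \eqref{solhi}, \eqref{H} and performing the limit computation \eqref{lim}, so the write-up of the theorem itself should be short, with the real content residing in the lemmas it cites.
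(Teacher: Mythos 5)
Your proposal is correct and follows essentially the same route as the paper, which likewise presents Theorem \ref{Main} as a summary of the explicit solutions \eqref{solhi} and \eqref{H}, the monotonicity \eqref{decreasing}, Proposition \ref{prop} with Lemma \ref{q}, the algebraic analysis \eqref{static_matrix}--\eqref{static2} of static metrics, and the normalized-flow limit computation via \eqref{lim}. Nothing essential is missing.
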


The following proposition can be thought of as a complement of the main Theorem \ref{Main} when the $U^c$-orbits are closed. We keep the same notation as in Theorem \ref{Main} and we consider the foliation, again denoted by $\mathcal Q$, given by the $U^c$-orbits. If $d_t$ denote the distance on $M$ induced by the metric tensor $h_t$ ($t\in [0,T)$), we describe the Gromov-Hausdorff limit of the metric spaces $(M,d_t)$ when $t\to T$.
\begin{proposition}\label{GH} If the $U^c$-orbits are closed, the leaf space $\overline  M := M/\mathcal Q$ has the structure of a smooth homogeneous manifold $G/\hat U$ for some closed subgroup $\hat U\subseteq G$. Moreover the positive semidefinite tensor $\hat h$ induces a $G$-invariant Riemannian metric $\overline h$ on $\overline M$ with induced distance $\bar d$ and the metric spaces $(M,d_t)$ Gromov-Hausdorff converge to  $(\overline M,\bar d)$ when $t\to T$.  \end{proposition}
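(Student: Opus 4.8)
The plan is to realise $\overline M$ as a homogeneous fibre bundle over which $\hat h$ descends to a genuine metric, and then to prove the Gromov--Hausdorff convergence by showing simultaneously that the projection is $1$-Lipschitz and that the collapsing leaves have vanishing diameter. First I would note that, being closed in the compact $M$, each leaf $\mathcal L:=U^c\cdot o$ is compact, and that $G$ acts transitively on the set of leaves since it permutes them. Setting $\hat U:=\{g\in G: g\cdot o\in\mathcal L\}$, continuity of $g\mapsto g\cdot o$ together with closedness of $\mathcal L$ make $\hat U$ a closed subgroup; since the leaf through $g\cdot o$ equals $\mathcal L$ exactly when $g\cdot o\in\mathcal L$, we have $g\mathcal L=\mathcal L$ iff $g\in\hat U$, so the leaf space is $G/\hat U$, and as $L\subseteq\hat U$ the quotient $M=G/L\to G/\hat U=\overline M$ is a smooth $G$-fibration. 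A direct Lie-algebra computation then gives $\hat\gu=\gl\oplus(\gn_1\oplus\cdots\oplus\gn_p)\oplus\gf_p$ with $\gf_p:=(\mathcal Z_p\oplus\overline{\mathcal Z_p})\cap\gf$; using that each $(G_i,K_i)$ is symmetric and that $\gf\subseteq\gz(\gk)$ is abelian one checks that $\hat\gu$ is a subalgebra, and Lemma \ref{q} identifies $\hat\gu\cap\gm$ with $\gq\cap\gm=T_o\mathcal L$, so that the fibre $\hat U/L$ of $M\to\overline M$ is exactly $\mathcal L$.

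Next, $\hat h$ is $G$-invariant as a limit of $G$-invariant tensors and is positive semidefinite with kernel $\gq\cap\gm=\hat\gu\cap\gm$, so it descends to a positive definite form on $\gg/\hat\gu\cong\gm/(\gq\cap\gm)$. I would verify that this form is $\Ad(\hat U)$-invariant: $\Ad(L)$-invariance is given; the $\gf_p$-directions act on each $\gn_j$ ($j>p$) as a multiple of $I$, hence $\kappa_j$-skew, and trivially on the complement $\gf'$, so they preserve $\hat h$; and the $\gn_i$ ($i\le p$) directions act trivially on the quotient because their brackets with the fixed complement $\mathfrak r:=\bigoplus_{j>p}\gn_j\oplus\gf'$ land back in $\hat\gu$. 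Since $\hat U=L\cdot\hat U_0$, these checks yield $\Ad(\hat U)$-invariance, hence a $G$-invariant Riemannian metric $\overline h$ on $\overline M$ with $\pi^*\overline h=\hat h$.

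Finally I would prove the convergence using $\pi\colon(M,d_t)\to(\overline M,\bar d)$ as an approximate isometry. As $h_i(t)=A_i-\tfrac t2$ is decreasing and \eqref{decreasing} shows the fibre metric is decreasing, one obtains $h(t)\ge\hat h=\pi^*\overline h$ for every $t<T$, so $\pi$ is $1$-Lipschitz and $\bar d(\pi x,\pi y)\le d_t(x,y)$. For the reverse estimate I would horizontally lift a minimising geodesic of $\overline h$ along the fixed $G$-invariant distribution $\mathcal H\leftrightarrow\mathfrak r$: since $h(t)|_{\mathcal H}\to\hat h|_{\mathcal H}$ and $d\pi|_{\mathcal H}$ is an $\hat h$-isometry onto $T\overline M$, the lift has length $\le(1+\delta_t)\,\bar d(\pi x,\pi y)$, and closing it up inside the fibre costs at most the leaf diameter $\eta_t$; with $\mathrm{diam}(\overline M)<\infty$ this gives $|d_t(x,y)-\bar d(\pi x,\pi y)|\le\epsilon_t\to0$, so that $(M,d_t)\to(\overline M,\bar d)$ in the Gromov--Hausdorff sense. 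The two uniform inputs $\delta_t\to0$ and $\eta_t\to0$ reduce, by $G$-homogeneity and compactness of $G$, to the single fibre $\hat U/L$, where $h(t)|_{\hat\gu\cap\gm}\to0$ by \eqref{solhi} and Lemma \ref{q}. The main obstacle is precisely this last point: one must lift globally and control the horizontal distortion $\delta_t$ and the leaf diameter $\eta_t$ uniformly in the base point, and it is exactly here that compactness of the leaves (guaranteed by their closedness) together with $G$-homogeneity are indispensable.
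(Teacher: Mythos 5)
Your proposal is correct and follows essentially the same route as the paper: identify $\overline M=G/\hat U$ via the transitive $G$-action on the closed leaves, descend $\hat h|_{\hat\gm\times\hat\gm}$ to an invariant metric using the Lie-algebra decomposition and $\Ad(\hat U)$-invariance, and prove convergence by combining the $1$-Lipschitz property of the projection (from monotonicity of $h_t$) with lifts of minimizing $\bar h$-geodesics closed up by paths in the collapsing fiber. Your added attention to the uniformity of $\delta_t$ and $\eta_t$ (via homogeneity and compactness of the leaves) is a reasonable refinement of the pointwise distance convergence the paper establishes, not a different argument.
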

\begin{proof} Note that $G$ acts transitively on the leaves of the $G$-invariant foliation $\mathcal Q$. The closedness of the leaves implies that $\overline M$ is Hausdorff and it can be expressed as a coset space $G/\hat U$ for some closed subgroup $\hat U$ that contains both $U$ and $L$. At the level of Lie algebras we can write the decomposition
$$\gg = \hat\gu \oplus \hat\gm,$$
where the subspace $\hat\gm$ is defined as the $\kappa$-orthocomplement of $\hat\gu$. As $\gl\subset \hat\gu$ we have that $\hat\gm\subset \gm$. Moreover $\gu = \bigoplus_{i=1}^p\gg_i\subset \hat\gu$ implies that $\hat\gm\subseteq \bigoplus_{j=p+1}^s\gg_j$. We now note that the $G$-equivariant projection $\pi:M\to N$ maps the orbit $U^co$ onto the $U$-orbit $\prod_{i=1}^pN_i\times \prod_{j=p+1}^s\{[K_j]\}$, where $N_i:= G_i/K_i$ for $i=1\ldots s$, whence $\hat U\subseteq \prod_{i=1}^p G_i \times \prod_{j=p+1}^sK_j$.Then we can write 
$$\hat \gm = \bigoplus_{j=p+1}^s\gn_j \oplus \hat \gf,\quad \hat\gf \subseteq \bigoplus_{j=p+1}^s\mathbb R Z_j.$$
This implies in particular that $\Ad(\hat U)|_{\hat\gf} = \mbox{Id}$. Therefore the restriction $\hat h|_{\hat\gm\times\hat\gm}$ gives a positive definite $\Ad(\hat U)$-invariant inner product which descends to a $G$-invariant Riemannian metric $\bar h$ on $\overline M$. \par We now prove the last statement, namely that for $x,y\in M$ we have 
$\lim_{t\to T}d_t(x,y) = \bar d(p(x),p(y))$, where $\bar d$ is the distance induced by $\bar h$ and $p:M\rightarrow\overline{M}$ is the projection. 
We denote by $\gamma^t$ a minimizing geodesic for $h_t$ joining $x$ with $y$. As \eqref{solhi} and \eqref{decreasing} imply that $h_t(v,v)$ is a non-incerasing funtion of $t$ for every tangent vector $v$, we see that  
\begin{equation*}\begin{split}
d_t(x,y) &= \int_0^1 h_t(\frac{d\gamma^t}{ds},\frac{d\gamma^t}{ds})^{1/2}\ ds\geq 
\int_0^1\hat h(\frac{d\gamma^t}{ds},\frac{d\gamma^t}{ds})^{1/2}\ ds\\
&=  \int_0^1\bar h(\frac{dp\circ\gamma^t}{ds},\frac{dp\circ\gamma^t}{ds})^{1/2}\ ds  \geq \bar d(p(x),p(y)).
\end{split}\end{equation*}
This means that 
$$\liminf_{t\to T}d_t(x,y) \geq \bar d(p(x),p(y)). $$
On the other hand let $\gamma$ be a minimizing geodesic for $\bar h$ connecting $p(x)$ with $p(y)$. Let $\tilde \gamma$ be a lift of $\gamma$ starting at $x$ with ending point $\tilde y$. We choose a path $\eta$ in $p^{-1}(p(y))$ connecting $y$ with $\tilde y$. Then 
$$d_t(x,y)\leq d_t(x,\tilde y)+d_t(\tilde y,y)\leq 
\int_0^1h_t(\tilde \gamma'(s),\tilde \gamma'(s))^{1/2} ds + 
\int_0^1 h_t(\eta'(s),\eta'(s))^{1/2}ds.$$
Now 
$$\lim_{t\to T} \int_0^1 h_t(\eta'(s),\eta'(s))^{1/2} ds = 0 ,$$
while 
\begin{equation*}\begin{split}
\lim_{t\to T} \int_0^1h_t(\tilde \gamma'(s),\tilde \gamma'(s))^{1/2}ds &= 
\int_0^1\hat h(\tilde \gamma'(s),\tilde \gamma'(s))^{1/2}ds\\
&=\int_0^1\bar h(\gamma'(s),\gamma'(s))^{1/2}ds = \bar d(p(x),p(y)).
\end{split}\end{equation*}
This implies that 
$$\limsup_{t\to T} d_t(x,y) \leq \bar d(p(x),p(y))$$
and this concludes the proof.   \end{proof} 
\begin{remark} Note that the homogeneous space $\overline{M}$ might not carry any ($G$-invariant) complex structure.
\end{remark}

\subsection{Example} A C-space $M$ will be called  of {\it Calabi-Eckmann type} if $M = G/L$, where $G = G_1\cdot G_2$, $L = L_1\cdot L_2$ and $L_i$ is the semisimple part of $K_i$ for $i=1,2$ (see \cite{CE},\cite{W}). The space $M$, which is $T^2$-bundle over  a product of two Hermitian symmetric spaces, can be endowed with many invariant complex structures as described in section 2. We consider now a more general C-space $M$ which is given by a product of C-spaces of Calabi-Eckmann type  $M_1,M_3,\ldots,M_{2k-1}$ with $M_i = G_i\cdot G_{i+1}/L_i\cdot L_{i+1}$,  endowed with the invariant complex structure $J$ given by the product of invariant complex structures  on each factor $M_i$, $i=1,3,\ldots,2k-1$.  \par 
We now consider an initial datum $(h_o^N,h_o^F)$, where $h_o^N$ is determined by a sequence $(A_1,\ldots,A_{2k})$. We can rearrange the indexes as in the proof of Proposition \ref{prop}, i.e. 
$A=A_1=\dots=A_p < A_j$ for all $j=p+1,\ldots,2k$. Note that  there is an involution $\s$ of the set $\{1,\ldots,2k\}$ so that for each index $1\leq l\leq 2k$ we have $JZ_l\in \Span\{Z_l,Z_{\s(l)}\}$. Hence
$$T_o(U^c\cdot o) = \gn_1\oplus\ldots\oplus \gn_p \oplus \Span\{Z_1,\ldots, Z_p,Z_{\s(1)},\ldots, Z_{\s(p)}\}.$$
Note that in this case the $S^c$-orbits are closed. When $t\to 2A$ the manifold $M$ collapses to a product of Hermitian symmetric spaces and a lower dimensional C-space $M'$ of Calabi-Eckmann type. Indeed, if we set $\mathcal I_1 :=  \{p+1,\ldots 2k\}\cap \s(\{p+1,\ldots,2k\})$ and $\mathcal I_2 := \{p+1,\ldots 2k\}\setminus \mathcal I_1$, then the manifold collapses to 
$$\left(\prod_{i\in \mathcal I_2} G_i/K_i\right) \times M',\qquad  M':= \prod_{i\in \mathcal I_1} G_i/L_i. $$


\begin{thebibliography}{ASM}
	
\bibitem{A}
	D.~Akhiezer.
	\newblock { Lie group actions in complex analysis}.
	\newblock Aspects of Mathematics, vol. E27 Vieweg (1995).	
\bibitem{AL}
	R.M.~Arroyo, R.A.~Lafuente.
	\newblock{\em The long-time behavior of the homogeneous pluriclosed flow}.
	\newblock arXiv:1712.02075v1 [math.DG] (2017).
\bibitem{B}
	J.~Boling.
	\newblock{\em Homogeneous solutions of pluriclosed flow on closed complex surfaces}. 
	\newblock{J. Geom. Anal.}, {\bf 26} (2016), 2130–-2154.
\bibitem{CE} 
	E.~Calabi, B.~Eckmann.
	\newblock{\em A new class of compact complex manifolds which are not algebraic}.
	\newblock {Ann. of Math.}, {\bf 58} (1953), 494--500.
\bibitem{FGV}
    A. Fino, G. Grantcharov, L. Vezzoni.
	\newblock {\em Astheno-K\"ahler and balanced structures on fibrations}.
	\newblock Int. Math. Res. Not. rnx337, https://doi.org/10.1093/imrn/rnx337.
\bibitem{G}
    P. Gauduchon.
    \newblock {\em Hermitian connections and Dirac operators}.
    \newblock Boll. Un. Mat. Ital. B(7), {\bf 11}(2) (1997), 257--288.
\bibitem{Gi}
    M. Gill.
    \newblock{\em Convergence of the parabolic complex Monge-Amp\`ere equation on compact Hermitian manifolds}.
    \newblock Comm. Anal. Geom. {\bf 19} (2011), 277--303.
\bibitem{KN}
	S. Kobayashi, K. Nomizu.
	\newblock Foundations of Differential Geometry.
	\newblock Interscience Tracts in Pure and Applied Mathematics, No. 15, Vol. II, John Wiley Sons, Inc.,
	          New York-London-Sidney 1969.
\bibitem{LPV}
    R.A.~Lafuente, M.~Pujia, L.~Vezzoni.
    \newblock {\em Hermitian Curvature flow on Lie groups and static invariant metrics}.
    \newblock arXiv:1807.00059 [math.DG] (2018).
\bibitem{M}
    N. Mok.
    \newblock {\em The uniformization theorem for compact K\"ahler manifolds of non-negative holomorphic bisectional curvature}.
    \newblock J. Diff. Geom. {\bf 27} (1988), 179--214.
\bibitem{P}
	F.~Podest\`a.
	\newblock {\em Homogeneous Hermitian manifolds and special metrics}.
	\newblock Transf. Groups {\bf 23}(4) (2018), 1129--1147.
\bibitem{ST2}
	J.~Streets, G.~Tian.
	\newblock{\em A parabolic flow of pluriclosed metrics}.
	\newblock Int. Math. Res. Not. 2010, no. 16, 3101--3133.
\bibitem{ST}
 	J.~Streets, G.~Tian. 
 	\newblock {\em Hermitian curvature flow}.
 	\newblock J. Eur. Math. Soc. {\bf 13} (2011), 601--634.
\bibitem{TW}
	V.~Tosatti, B.~Weinkove.
	\newblock {\em On the evolution of a Hermitian metric by its Chern-Ricci form}.
	\newblock J. Diff. Geom. {\bf 99} (2015), 125--163.
\bibitem{U}
   	Y.~Ustinovskiy.
   	\newblock {\em The Hermitian curvature flow on manifolds with non-negative Griffiths curvature}.
   	\newblock arXiv:1604.04813v2 [math.CV] (2016), to appear in Amer. J. Math.
\bibitem{U2}
	Y.~Ustinovskiy.
   	\newblock {\em Hermitian curvature flow on complex homogeneous manifolds}.
   	\newblock 	arXiv:1706.07023 [math.DG] (2017).
\bibitem{U3}
   	Y.~Ustinovskiy.
   	\newblock {\em Lie-algebraic curvature conditions preserved by the Hermitian curvature flow}.
   	\newblock 	arXiv:1710.06035 [math.DG] (2017).
\bibitem{W}
H.C.~Wang.
\newblock{\em Closed manifolds with homogenous complex structure}.
\newblock Amer. J. Math. {\bf 76} (1954), 1--32.


\end{thebibliography}
\end{document}